\numberwithin{equation}{section}
\newtheorem{corollary}{Corollary}[section]
\newtheorem{definition}[corollary]{Definition}
\newtheorem{lemma}[corollary]{Lemma}
\newtheorem{thm}[corollary]{Theorem}
\newfont{\sBlackboard}{msbm10 scaled 900}
\newcommand{\mylabel}[1]{\label{#1}
            \ifx\undefined\stillediting
            \else \fbox{$#1$}\fi }
\newcommand{\BE}{\begin{equation}}
\newcommand{\EEQ}{\end{equation}}
\newcommand{\rfb}[1]{\mbox{\rm
   (\ref{#1})}\ifx\undefined\stillediting\else:\fbox{$#1$}\fi}
\newcommand{\half}   {{\frac{1}{2}}}
\newfont{\Blackboard}{msbm10 scaled 1200}
\newfont{\roma}{cmr10 scaled 1200}
\def\RR{{\mathbb R} }
\newcommand{\NN}{\mathbb{N}}
\newcommand{\di}{\displaystyle}
\newcommand{\ee}{\mathcal E}
\newcommand{\ri}{\rightarrow}
\newcommand{\bb}{\begin{equation}}
\newcommand{\bbb}{\end{equation}}
\newcommand{\WW}{{\mathcal W}}
\newcommand{\mm}    {{\hbox{\hskip 0.5pt}}}
\newcommand{\bluff} {{\hbox{\raise 15pt \hbox{\mm}}}}
\def\section{\@startsection {section}{1}{\z@}{-3.5ex plus -1ex minus
    -.2ex}{2.3ex plus .2ex}{\large\bf}}
\begin{document}
\title[Double phase transonic flow problems]{Double phase transonic flow problems with variable growth: nonlinear patterns and stationary waves}
\author[A. Bahrouni]{Anouar Bahrouni}
\address[A. Bahrouni]{Mathematics Department, University of Monastir,
Faculty of Sciences, 5019 Monastir, Tunisia} \email{\tt
bahrounianouar@yahoo.fr}
\author[V.D. R\u{a}dulescu]{Vicen\c{t}iu D. R\u{a}dulescu}
\address[V.D. R\u{a}dulescu]{Faculty of Applied Mathematics, AGH University of Science and Technology, 30-059 Krak\'ow, Poland \& Institute of Mathematics, Physics and Mechanics, 1000 Ljubljana, Slovenia \& Institute of Mathematics ``Simion Stoilow" of the Romanian Academy, 014700 Bucharest, Romania}
\email{\tt   vicentiu.radulescu@imfm.si}
\author[D.D. Repov\v{s}]{Du\v{s}an D. Repov\v{s}}
\address[D.D. Repov\v{s}]{Faculty of Education and Faculty of Mathematics and Physics, University of Ljubljana, Slovenia \& Institute of Mathematics, Physics and Mechanics, 1000 Ljubljana, Slovenia}
\email{\tt dusan.repovs@guest.arnes.si}
\begin{abstract} In this paper we are concerned with a class of double phase energy functionals arising in the theory of transonic flows. Their main feature is that
the associated Euler equation is driven by the Baouendi-Grushin operator with variable coefficient.
This partial differential equation is of mixed type and possesses both elliptic and hyperbolic regions.
After establishing a weighted inequality for the Baouendi-Grushin operator and a related compactness property, we establish the existence of stationary waves under arbitrary  perturbations  of  the reaction. A description of a related transonic flow model can be found in G.-Q.G. Chen, M. Feldman (2015), {\it Philos. Trans. Roy. Soc. A} {\bf 373}:20140276 (arXiv:1412.1509 [math.AP]).
\end{abstract}
\keywords{Baouendi-Grushin operator, Caffarelli-Kohn-Nirenberg inequality, transonic flow, nonlinear eigenvalue problem, variable exponent.\\
\phantom{aa} 2010 AMS Subject Classification: Primary 35J70,
Secondary 35P30, 76H05}
\maketitle

\section{Unbalanced problems and double phase variational integrals}
The present paper was motivated by recent  fundamental progress in the mathematical analysis of nonlinear models with unbalanced growth. We point out the early works of Marcellini \cite{marce1,marce2} who was interested in qualitative properties, such as lower semicontinuity and regularity of minimizers in the abstract setting of quasiconvex integrals. Related problems were inspired by models arising in nonlinear elasticity and they describe the deformation of an elastic body, see Ball \cite{ball1,ball2}.

We recall some basic facts concerning double phase problems. Let $\Omega\subset \RR^N$ ($N\geq 2$) be a bounded domain with smooth boundary. Let $u:\Omega\to\RR^N$ denote the displacement and assume that $D u$ is the $N\times N$  matrix associated to the deformation gradient. It follows that the total energy is described by an integral of the following type
\bb\label{paolo}I(u)=\int_{\Omega} f(x,D u(x))dx.\bbb
Here, the potential $f=f(x,\xi):\Omega\times\RR^{N\times N}\to\RR$ is assumed to be a quasiconvex function with respect to the second variable; we refer to Morrey \cite{morrey} for details.

Ball \cite{ball1,ball2} was interested in potentials given by
$$f(\xi)=g(\xi)+h({\rm det}\,\xi),$$
where ${\rm det}\,\xi$ denotes the determinant of the $N\times N$ matrix $\xi$. It is also assumed that $g$ and $h$ are nonnegative convex functions satisfying the growth hypotheses
$$g(\xi)\geq c_1\,|\xi|^p\quad\mbox{and}\quad\lim_{t\to+\infty}h(t)=+\infty,$$
where $c_1>0$  and $1<p\leq N$. We point out that the assumption $p\leq N$ was necessary in order to study the existence of cavities for equilibrium solutions, that is, minima of the energy functional \eqref{paolo} which are discontinuous at one point where a cavity appears. In fact, every function $u$ with finite energy belongs to the function space $W^{1,p}(\Omega,\RR^N)$, hence it is  continuous if $p>N$. Accordingly, Marcellini \cite{marce1,marce2} considered  functions $f=f(x,\xi)$ with different growth near the origin and at infinity (unbalanced growth), which satisfy the hypothesis
$$c_1\,|\xi|^p\leq |f(x,\xi)|\leq c_2\,(1+|\xi|^q)\quad\mbox{for all}\ (x,\xi)\in\Omega\times\RR,$$
where $c_1$, $c_2$ are positive constants and $1\leq p\leq q$. Regularity and existence of solutions of elliptic equations with $(p,q)$--growth conditions were studied in \cite{marce2}.

The analysis of non-autonomous energy functionals with energy density changing its ellipticity and growth properties according to the point was developed in several remarkable papers by Mingione {\it et al.} \cite{mingi1}--\cite{mingi5}. These contributions are related to the works of Zhikov \cite{zhikov1}, and they describe the
nature of certain phenomena arising in nonlinear
elasticity.
For instance, Zhikov was interested in providing models for strongly anisotropic materials in the framework of homogenization.
The associated functionals also demonstrated their importance in the study of duality theory
as well as in the context of the Lavrentiev phenomenon \cite{zhikov2}. In relationship with these research directions, Zhikov introduced three different model
functionals, mainly in the context of the Lavrentiev phenomenon. These models are the following:
\bb\label{mingfunc}\begin{array}{ll}
{\mathcal M}(u)&\di :=\int_{\Omega} c(x)|D u|^2dx,\quad 0<1/c(\cdot)\in L^t(\Omega),\ t>1\\
{\mathcal V}(u)&\di :=\int_{\Omega} |D u|^{p(x)}dx,\quad 1<p(x)<\infty\\
{\mathcal P}_{p,q}(u)&\di :=\int_{\Omega} (|D u|^p+a(x)|D u|^q)\,dx,\quad 0\leq a(x)\leq L,\ 1<p<q.
\end{array}
\bbb

The functional ${\mathcal M}$ is characterized by a loss of ellipticity on the subset of $\Omega$ where the potential $c$ vanishes.  This functional has been studied in relationship with nonlinear equations involving
Muckenhoupt weights. The functional ${\mathcal V}$ is still the object of great interest nowadays and several relevant papers have been developed about it.  We  refer to Acerbi and Mingione \cite{acerbi2} in the context of gradient estimates and  contributions to the qualitative analysis of minimizers of nonstandard energy functionals with variable coefficients. We also point out the abstract setting, respectively the variational analysis developed in the monograph  by R\u adulescu and Repov\v s \cite{radrep} (see also the survey papers \cite{radnla, radnom}).
The energy functional defined by ${\mathcal V}$ has been used to build consistent models for strongly
anisotropic materials: in a material made of different components, the exponent $p(x)$
dictates the geometry of a composite that changes its hardening exponent according to
the point.  The functional ${\mathcal P}_{p,q}$ defined in \eqref{mingfunc} appears as un upgraded version of ${\mathcal V}$. Again, in this
case, the modulating potential $a(x)$ controls the geometry of the composite made by
two differential materials, with corresponding hardening exponents $p$ and $q$.

Following Marcellini's terminology, the functionals defined in \eqref{mingfunc} belong to the realm of energy functionals with
nonstandard growth conditions of $(p, q)$--type. These are functionals of the type defined in relation \eqref{paolo}, where the energy density satisfies
$$|\xi|^p\leq f(x,\xi)\leq  |\xi|^q+1,\quad 1\leq p\leq q.$$

An alternative relevant example of a functional having $(p,q)$--growth  is given by
$$u\mapsto \int_{\Omega} |D u|^p\log (1+|D u|)\,dx,\quad \mbox{for}\ p\geq 1,$$
which can be seen as a logarithmic perturbation of the classical $p$-Dirichlet energy. We refer to  Mingione {\it et al.} \cite{mingi1}--\cite{mingi5} for more details. We also point out the recent paper by Cencelj {\it et al.} \cite{cencelj} in the framework of double phase problems with variable growth.

The main feature of our paper is the study of a class of unbalanced double phase problems with variable coefficient. This problem is strictly connected with the analysis of nonlinear patterns and stationary waves
for transonic flow models. We refer to the pioneering  work of Morawetz \cite{mora1,mora2,mora3} on the theory of transonic fluid flow ---referring to partial differential equations that possess both elliptic and hyperbolic regions--- and this remains the most fundamental mathematical work on this subject. The flow is supersonic in the elliptic region, while a shock wave is created at the boundary between the elliptic and hyperbolic regions. In the 1950s, Morawetz used functional--analytic methods to study boundary value problems for such transonic problems.

Throughout this paper we assume that $\Omega\subset\RR^N$ is a bounded domain with smooth boundary. Let $n$, $m$ be nonnegative integers such that $N=n+m$, hence $\RR^N=\RR^n\times\RR^m$. An element $z\in\Omega$ is written as $z=(x,y)$, where $x\in\RR^n$ and $y\in\RR^m$.
The energy studied in this paper is somehow related to the functional ${\mathcal P}_{p,q}(u)$ defined in \eqref{mingfunc} and is of the type
\bb\label{uu1} \int_{\Omega}\frac{\left|{\rm grad}_{x}u\right|^{G(x,y)}+
\left|x\right|^{\gamma}\left|{\rm grad}_{y}u\right|^{G(x,y)}}{G(x,y)}\,dz.\bbb
 In such a case, the variable coefficient $G(x,y)$ describes the geometry of a composite realized by using two materials
with corresponding behaviour described by $|{\rm grad}_{x}u|^{G(x,y)}$ and $|{\rm grad}_{y}u|^{G(x,y)}$. Then in the region $\{z\in\Omega:\ x\not=0\}$ the material
described by  the second integrand is present. In the opposite case, the material described by the first integrand is the only one that creates
the composite.

We also point out that the integral functional \eqref{uu1} is a {\it double phase} energy with variable coefficient due to the presence of the unbalanced terms $|{\rm grad}_{x}u|^{G(x,y)}$ and $|{\rm grad}_{y}u|^{G(x,y)}$ combined with the weight $|x|^\gamma$.

The  differential operator associated to \eqref{uu1} is {\it degenerate} and of {\it mixed type}. This operator is
\bb\label{uu2}{\rm div}_{x}\left(|{\rm grad}_{x} |^{G(x,y)-2}{\rm grad}_{x}u \right)+{\rm div}_{y}\left(|x|^{\gamma}\left|{\rm grad}_{y} \right|^{G(x,y)-2}{\rm grad}_{y}u\right),\bbb
where $G(x,y)$ is a variable coefficient.

\subsection{Degenerate operators of mixed type}
In 1923, Tricomi \cite{tricomi} began the study of second-order partial differential equations  of mixed type by introducing the operator
$$T:=\frac{\partial^2}{\partial x^2}+x\frac{\partial^2}{\partial y^2}.$$
This operator is elliptic on $\{(x,y)\in\RR^2:\ x>0\}$, hyperbolic on $\{(x,y)\in\RR^2:\ x>0\}$, and degenerate on $\{(x,y)\in\RR^2:\ x=0\}$.

An interesting application of this class of elliptic-hyperbolic differential operators can be found in relationship with the theory of planar transonic flow, see \cite{caracal}.
The associated waves are steep wavefronts that propagate in compressible fluids in which convection dominates
diffusion. They are fundamental in nature, especially in high-speed fluid flows. Many such shock reflection/diffraction problems can be formulated as boundary value
problems involving nonlinear partial differential equations of mixed elliptic-hyperbolic
type.

Numerous attempts have been
made recently in order to study the Tricomi operator $T$ as well as some extensions  obtained either by substituting the
degeneracy $x$ with a function $g(x)$ or by replacing the second-order derivative $\partial^2_y$ with the Laplace operator.

In a different context and for different purposes, Baouendi \cite{baouendi} and Grushin \cite{grushin}
considered other classes of degenerate operators, an example is given by
\bb\label{ambr}\frac{\partial^2}{\partial x^2}+x^{2r}\frac{\partial^2}{\partial y^2},\quad r\in\NN.\bbb
The Baouendi--Grushin operator can be viewed as the Tricomi operator for transonic flow restricted to subsonic regions.
On the other hand, a second-order differential operator $T$ in divergence form on the
plane, can be written as an operator whose principal part is a Baouendi-Grushin-type operator,
provided that the principal part of $T$ is nonnegative and its quadratic form does not
vanish at any point, see \cite{franchi}.

Let us introduce $G_{2r}:=\Delta_x+|x|^{2r}\Delta_y$, with $x\in\RR^n$, $y\in\RR^m$, and $n+m=N$. This operator can be seen as the $N$-dimensional analogue of \eqref{ambr}. If $z=(x,y)\in\RR^N$, we notice that the operator $G_{2r}$ can be rewritten, with a suitable choice of function $a_\alpha$, in the following form:
$${\mathcal L}u:=\sum_{|\alpha|}D^\alpha_z(a_\alpha (z,u)).$$ This class of differential operators was studied by Mitidieri and Pohozaev \cite{miti1} and D'Ambro\-sio \cite{ajfa}.

 Let $\Omega \subset
\mathbb{R}^{N}$ be a bounded domain with smooth boundary. Assume that $N=n+m$. We now introduce the Baouendi--Grushin operator with variable coefficient. Let us consider the continuous function $ G: \overline{\Omega}\rightarrow (1,\infty)$. We introduce the nonhomogeneous  operator $\Delta_{G(x,y)}$ defined by
\begin{align*}
\Delta_{G(x,y)}u&={\rm div}\,({\rm grad}_{G(x,y)}u )\\
&={\rm div}_{x}(\left|{\rm grad}_{x} \right|^{G(x,y)-2}{\rm grad}_{x}u )+{\rm div}_{y}(\left|x\right|^{\gamma}\left|{\rm grad}_{y} \right|^{G(x,y)-2}{\rm grad}_{y} u)\\
&=\displaystyle
\sum_{i=1}^{n}\left(\left|{\rm grad}_{x}
\right|^{G(x,y)-2}u_{x_i}\right)_{x_i}+\left|x\right|^{\gamma} \displaystyle
\sum_{i=1}^{m}\left(\left|{\rm grad}_{y}
\right|^{G(x,y)-2}u_{y_i}\right)_{y_i},
\end{align*}
where
\[
{\rm grad}_{G(x,y)}u={\mathcal A}(x)\begin{bmatrix}
\left|{\rm grad}_{x} \right|^{G(x,y)-2}{\rm grad}_{x}u \\
\left|x\right|^{\gamma}\left|{\rm grad}_{y}
\right|^{G(x,y)-2}{\rm grad}_{y}u
\end{bmatrix}
\]
and
\[
{\mathcal A}(x)=\begin{bmatrix}
I_{n} & O_{n,m}\\
O_{m,n}& \left|x\right|^{\gamma} I_{m}
\end{bmatrix}
\in {\mathcal M}_{N\times N}(\mathbb{R}).
\]

Then the operator $\Delta_{G(x,y)}$ is degenerate  along the $m$-dimensional subspace $M:=\{0\}\times\RR^m$ of $\RR^N$.

The present paper complements our previous contributions related to double phase anisotropic variational integrals, see \cite{dou2,dou1}. This paper also extends our recent results established in \cite{annon} to a mixed elliptic-hyperbolic setting. In this way, Euclidean results mentioned above continue to be a source of inspiration
for the problem of finding analogues and new inequalities on the sub-Riemannian space $\RR^N=\RR^m\times\RR^n$ defined by the sub-elliptic gradient, which is the $N$-dimensional vector field given by
$${\rm grad}_\gamma =({\rm grad}_x,|x|^\gamma{\rm grad}_y)=(X_1,\ldots ,X_m,Y_1,\ldots ,Y_n)$$
with the corresponding Baouendi-Grushin vector fields
$$X_i=\left| {\rm grad}_x\right|^{G(x,y)-2}\frac{\partial}{\partial x_i},\ \ i=1,\ldots ,m$$ and $$Y_j=|x|^\gamma\,\left| {\rm grad}_y\right|^{G(x,y)-2}\frac{\partial}{\partial y_j},\ \ j=1,\ldots ,n.$$

In the isotropic case corresponding to $G(x,y)\equiv 2$, the above vector fields $X_i$ and $Y_j$ are homogeneous of degree one with respect to the dilation
$$X_i(\delta_\lambda)=\lambda\delta_\lambda(X_i),\quad Y_j(\delta_\lambda)=\lambda\delta_\lambda(Y_j),$$
where the anisotropic dilation $\delta_\lambda$ is defined by $\delta_\lambda (x,y):=(\lambda x,\lambda^{1+\gamma}y)$. However, $\Delta_{2}$ is not translation invariant in $\RR^N$ but it is invariant with respect to the translations
along $M$. Assuming that  $G(x,y)\equiv 2$ and $\gamma=1/2$, then the operator $\Delta_{G(x,y)}$ is intimately connected to the sub-Laplacians in groups
of the Heisenberg type. A description of a related transonic flow model can be found in \cite{chen} (arXiv:1412.1509). Finally, we point out that if $\gamma$ is an even positive integer, then the Baouendi-Grushin operator is a sum
of squares of $C^\infty$ vector fields satisfying the H\"ormander condition.

\section{A weighted inequality for  $\Delta_G$}
One of the many significant contributions by Hardy and Littlewood on the subject of inequalities, and in particular integral inequalities involving derivatives of functions, can be found in their joint paper \cite{hardy}. This paper subsequently formed the basis of Chapter VII of the book of Hardy, Littlewood and P\'olya \cite{hardybook}; that chapter is essentially concerned with the applications of the calculus of variations to integral inequalities, but it also involves direct analytical methods required to avoid difficulties of singular problems and unattained bounds in the calculus of variations. In \cite{caffa} Caffarelli, Kohn and Nirenberg proved a rather general interpolation inequality with weights, which extends the Hardy-Littlewood inequality.
We refer to \cite{laptev, ambrpams, laptev1} for related inequalities in the context of Grushin-type operators.
In this section, motivated by the results obtained in our recent paper \cite{annon} in the  framework  of  variable  exponent, we first establish  a Caffarelli-Kohn-Nirenberg inequality
for  $\Delta_G$. Next, we deduce a compactness property of an anisotropic function space. This abstract result will play a key role in the mathematical analysis of a boundary value problem driven by the Baouendi-Grushin operator.

We define $$G^+:=\sup\{G(x,y):\ (x,y)\in\Omega\}\quad \mbox{and}\quad G^-:=\inf\{G(x,y):\ (x,y)\in\Omega\}.$$
We suppose that the domain $\Omega$ intersects the degeneracy set $[x=0]$, that is, $$\Omega\cap\{(0,y):\ y\in\RR^m\}\not=\emptyset.$$ Throughout this paper, we denote by $|\,\cdot\,|_{p(z)}$ the norm in the
Lebesgue space with variable exponent $L^{p(z)}(\Omega)$. For general properties of function spaces with variable exponent we refer to \cite{radrep}.

\begin{thm}\label{cafg}
Assume that $G$ is a function of class $C^{1}$ and that $G(x,y)\in (2,N)$ for all
$(x,y)\in\Omega$. Then there exists a positive
constant $\beta$ such that for all $u\in C^{1}_{c}(\Omega)$
$$\begin{array}{ll}
\displaystyle \int_{\Omega}(1+|x|^{\gamma})|u|^{G(x,y)}\,dxdy &\leq
\beta \displaystyle \int_{\Omega}(|{\rm grad}_{x}
u|^{G(x,y)}+ |x|^{\gamma}|{\rm grad}_{y} u|^{G(x,y)})dxdy \\
&+\beta \displaystyle
\int_{\Omega}|u|^{G(x,y)-1}(1+u^2)(|{\rm grad}_{x}
G(x,y)|+|x|^{\gamma}|{\rm grad}_{y}
G(x,y)|)\,dxdy.
\end{array}
$$
\end{thm}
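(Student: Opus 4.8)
The plan is to derive this weighted inequality through an integration-by-parts (divergence theorem) argument tailored to the anisotropic gradient $\mathrm{grad}_\gamma=(\mathrm{grad}_x,|x|^\gamma\mathrm{grad}_y)$. The left-hand side $\int_\Omega(1+|x|^\gamma)|u|^{G(x,y)}\,dxdy$ should be understood as the integral of $|u|^{G}$ against the natural weight of the operator. First I would write $(1+|x|^\gamma)|u|^{G} = \mathrm{div}_\gamma(F)$ for a suitably chosen vector field $F$ built from $u$ and the position variables $(x,y)$, using that $G>2$ guarantees enough differentiability of $|u|^{G}$ and that $u\in C^1_c(\Omega)$ makes all boundary terms vanish. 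A standard choice is a radial-type field such as $F=(x,y)\,|u|^{G}$ adapted to the Grushin dilation $\delta_\lambda(x,y)=(\lambda x,\lambda^{1+\gamma}y)$, so that the divergence reproduces the weight $(1+|x|^\gamma)$ up to controllable error terms. Since $u$ is compactly supported, integrating the divergence identity over $\Omega$ kills the boundary contribution, leaving $\int_\Omega(1+|x|^\gamma)|u|^G\,dxdy$ equal to the integral of $-F\cdot\mathrm{grad}_\gamma$ of the remaining factor.

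The key technical step is then to expand that divergence. When the derivative lands on $|u|^{G(x,y)}$, the chain rule produces two distinct kinds of terms: one in which the derivative hits $|u|$, yielding a factor $G(x,y)|u|^{G-1}\mathrm{grad}_\gamma u$, and one in which the derivative hits the exponent $G(x,y)$ itself, yielding a factor $|u|^{G}\log|u|\,(\mathrm{grad}_x G,|x|^\gamma\mathrm{grad}_y G)$. This is precisely the anisotropic exponent structure from the variable-growth setting of \cite{annon}, and it is the source of the two right-hand-side integrals in the statement: the first gradient-in-$u$ term feeds into $\beta\int_\Omega(|\mathrm{grad}_x u|^{G}+|x|^\gamma|\mathrm{grad}_y u|^{G})$, while the exponent-derivative term feeds into $\beta\int_\Omega|u|^{G-1}(1+u^2)(|\mathrm{grad}_x G|+|x|^\gamma|\mathrm{grad}_y G|)$.

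To turn these raw expressions into the stated bounds I would apply Young's inequality with variable exponents. The term of the form $|u|^{G-1}|\mathrm{grad}_\gamma u|$ splits via Young into a piece absorbed on the left as $\varepsilon(1+|x|^\gamma)|u|^{G}$ and a piece of the form $C_\varepsilon(|\mathrm{grad}_x u|^{G}+|x|^\gamma|\mathrm{grad}_y u|^{G})$; choosing $\varepsilon$ small lets me absorb the first piece back into the left-hand side. The logarithmic factor $\log|u|$ appearing in the exponent-derivative term is handled by the elementary estimate $|t|^{G-1}|\log|t||\le C(|t|^{G-1}+|t|^{G+1})=C|t|^{G-1}(1+t^2)$, valid uniformly because $G(x,y)$ ranges over the compact interval $[G^-,G^+]\subset(2,N)$; this is exactly what generates the factor $(1+u^2)$ in the second integral. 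The uniform control of all constants relies on $G\in C^1$ with $2<G^-\le G^+<N$ on the bounded domain $\overline\Omega$, so that $G$, $\mathrm{grad}_x G$, and $\mathrm{grad}_y G$ are bounded and the conjugate exponent $G'(x,y)=G/(G-1)$ stays in a compact subinterval of $(1,\infty)$.

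The main obstacle I anticipate is \emph{not} the choice of the multiplier field but the careful bookkeeping of the logarithmic term together with the degeneracy of the weight $|x|^\gamma$ along $M=\{0\}\times\RR^m$. Because $\mathrm{grad}_y$ always appears multiplied by $|x|^\gamma$, every application of Young's inequality must be done with respect to the correct weighted measure so that no term picks up an unweighted $|\mathrm{grad}_y u|^{G}$ (which would not be controlled by the right-hand side); keeping the weight $|x|^\gamma$ attached consistently through each splitting is the delicate part. Once the variable-exponent Young inequality is applied uniformly on $[G^-,G^+]$ and the small parameter $\varepsilon$ is absorbed, collecting the surviving terms yields the asserted inequality with a single constant $\beta$ depending only on $N$, $\gamma$, $G^\pm$, and the $C^1$ norm of $G$.
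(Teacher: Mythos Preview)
Your proposal is correct and follows essentially the same route as the paper's proof: the paper uses the two multiplier fields $W_1=(x,0_m)$ and $W_2=(0_n,y)$ (with the weight $|x|^\gamma$ attached to $W_2$), applies the divergence theorem to $|u|^{G(x,y)}W_1$ and $|x|^\gamma|u|^{G(x,y)}W_2$, expands via the chain rule into a gradient-of-$u$ term and a logarithmic gradient-of-$G$ term, then controls the former by Young's inequality (with a small parameter $\epsilon$ absorbed on the left) and the latter by the elementary bound $|u|^{G}|\log|u||\le \mu\,|u|^{G-1}(1+u^2)$, exactly as you describe. The only cosmetic difference is that the paper treats the $x$- and $y$-directions with two separate fields rather than a single Grushin-adapted field, and it invokes \cite[Lemma~3.1]{annon} for the logarithmic estimate you wrote out directly.
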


\begin{proof}
We define the functions $W_{1},W_{2}:\mathbb{R}^{n}\times
\mathbb{R}^{m}\rightarrow \mathbb{R}^{n}\times \mathbb{R}^{m}$ by
$$W_{1}(x,y)=(x,0_{m})=:(I_{1}(x),0_{m})\ \ \mbox{and} \ \ W_{2}(x,y)=(0_{n},y)=:(0_{n},I_{2}(y)).$$ Then
for all $(x,y)\in\Omega$
\bb\label{comp}\begin{array}{ll}
{\rm div}\, (W_{1}(x,y)\left|u\right|^{G(x,y)})&= |u|^{G(x,y)}
{\rm div}\,(W_{1})
+\displaystyle G(x,y)|u|^{G(x,y)-2} u {\rm grad}
u\cdot W_{1}\\
&\ \\
&+|u|^{G(x,y)} \log(|u|) {\rm grad} G\cdot W_{1}
\\
&\ \\
&= |u|^{G(x,y)}
{\rm div_{x}}\,(I_{1}(x))
+\displaystyle G(x,y)|u|^{G(x,y)-2} u {\rm grad}_{x}
u\cdot I_{1}(x)\\
&\ \\
&+|u|^{G(x,y)} \log(|u|) {\rm grad}_{x} G(x,y)\cdot I_{1}(x)
\end{array}
\bbb
and
\bb\label{comp18}\begin{array}{ll}
{\rm div}\,(|x|^{\gamma}|u|^{G(x,y)} W_{2}(x,y)) &=|x|^{\gamma}
|u|^{G(x,y)}
{\rm div_{y}}\,(I_{2}(y))\\
 &\ \\
&+\displaystyle |x|^{\gamma} G(x,y)|u|^{G(x,y)-2} u {\rm grad}_{y}
u\cdot I_{2}(y)\\
&\ \\
&+|x|^{\gamma}|u|^{G(x,y)} \log(|u|) {\rm grad}_{y} G(x,y)\cdot I_{2}(y).
\end{array}
\bbb
By the flux-divergence theorem we have for all $u\in C^{1}_{c}(\Omega)$
\bb\label{comp19}\displaystyle \int_{\Omega}{\rm div}\,(|u|^{G(x,y)} W_{1}(x,y))\,dxdy=\int_{\Omega}{\rm div}\,(|x|^{\gamma}|u|^{G(x,y)}
W_{2}(x,y))\,dxdy=0.\bbb
Combining relations \eqref{comp}--\eqref{comp19} and  \cite[Lemma 3.1]{annon}, we can deduce that
\begin{align}
\displaystyle  \int_{\Omega} |u|^{G(x,y)} &{\rm
{\rm div}_{x}}\,(I_{1}(x))\,dxdy \leq \displaystyle
\int_{\Omega}|u|^{G(x,y)}|\log(|u|)||{\rm grad}_{x}
G(x,y)||I_{1}(x)|\,dxdy \nonumber\\
&+G^{+}\displaystyle \int_{\Omega}|u|^{G(x,y)-1}|{\rm grad}_{x}
u| |I_{1}(x)|\,dxdy \nonumber \\
&\leq \mu \|W_{1}\|_{L^{\infty}(\Omega)} \displaystyle
\int_{\Omega}\left| {\rm grad}_{x} G(x,y)\right| |u|^{G(x,y)-1}(u^2+1)\,dxdy
\nonumber\\
&+  \epsilon G^{+}\|W_{1}\|_{L^{\infty}(\Omega)}
\displaystyle \int_{\Omega}|u|^{G(x,y)}\,dxdy +
G^{+}\frac{\|W_{1}\|_{L^{\infty}(\Omega)}}{\epsilon^{G^{-}-1}}
\displaystyle \int_{\Omega}|{\rm grad}_{x} u|^{G(x,y)}\,dx dy.\nonumber
\end{align}
Similarly, we have
\begin{align}
\displaystyle  \int_{\Omega}|x|^{\gamma} |u|^{G(x,y)} &
{\rm div}_{y}\,(I_{2}(x))\,dxdy \leq \mu
\|W_{2}\|_{L^{\infty}(\Omega)} \displaystyle
\int_{\Omega}|x|^{\gamma}|{\rm grad}_{y} G(x,y)||u|^{G(x,y)-1}(u^2+1)\,dxdy
\nonumber\\
&+  \epsilon G^{+}\|W_{2}\|_{L^{\infty}(\Omega)}
\displaystyle \int_{\Omega}|x|^{\gamma}|u|^{G(x,y)}dxdy\\
&+G^{+}\frac{\|W_{2}\|_{L^{\infty}(\Omega)}}{\epsilon^{G^{-}-1}}
\displaystyle \int_{\Omega}|x|^{\gamma}|{\rm grad}_{y} u|^{G(x,y)}dx dy.\nonumber
\end{align}

Combining these relations and taking into account
that ${\rm div}\,(W_{1})=n$ and ${\rm div}\,(W_{2})=m$,
 we deduce that
\begin{align*}
[2&-\epsilon
G^{+}(\|W_{1}\|_{L^{\infty}(\Omega)}+\|W_{2}\|_{L^{\infty}(\Omega)})
]\displaystyle \int_{\Omega}(1+|x|^{\gamma})|u|^{G(x,y)}dxdy \leq \nonumber\\
&\displaystyle
G^{+}\frac{\|W_{1}\|_{L^{\infty}(\Omega)}+\|W_{2}\|_{L^{\infty}(\Omega)}}{\epsilon
^{G^{-}-1}}
\int_{\Omega}(|{\rm grad}_{x} u|^{G(x,y)}+ |x|^{\gamma}|{\rm grad}_{y} u|^{G(x,y)})\,dxdy )\nonumber\\
&+ c\displaystyle \int_{\Omega}|u|^{G(x,y)+1}(|{\rm grad}_{x}
G(x,y)|+|x|^{\gamma}|{\rm grad}_{y}
G(x,y)|)\,dxdy\nonumber\\
&+\mu(\|W_{1}\|_{L^{\infty}(\Omega)}+\|W_{2}\|_{L^{\infty}(\Omega)})\displaystyle
\int_{\Omega}|u|^{G(x,y)-1}(|{\rm grad}_{x}
G(x,y)|+|x|^{\gamma}|{\rm grad}_{y} G(x,y)|)\,dxdy,\nonumber
\end{align*}
with $c=\mu (\|W_{1}\|_{L^{\infty}(\Omega)}+
\|W_{2}\|_{L^{\infty}(\Omega)})$. So, by choosing $$\epsilon
<
\frac{2}{G^{+}(\|W_{1}\|_{L^{\infty}(\Omega)}+\|W_{2}\|_{L^{\infty}(\Omega)})},$$
we set
$$\beta=\left(\|W_{1}\|_{L^{\infty}(\Omega)}+\|W_{2}\|_{L^{\infty}(\Omega)}\right)\frac{\max(
\mu,\frac{G^{+}}{\epsilon
^{G^{-}-1}})}
{2-\epsilon G^{+}(\|W_{1}\|_{L^{\infty}(\Omega)}+\|W_{2}\|_{L^{\infty}(\Omega)})}\,.$$
This completes the proof of Theorem \ref{cafg}.
\end{proof}

We denote by ${\mathcal W}$ the closure of
$C_{c}^{1}(\Omega)$ with respect to the norm
\begin{align*}
\left\|u\right\|&=\left|{\rm grad} _{x}u\right|_{G(x,y)}+\left| \left|x\right|^{\frac{\gamma}
{G(x,y)}} {\rm grad}
_{y}u\right|_{G(x,y)}\\&+\left|u(|{\rm grad}_{x}
G(x,y)|+|x|^{\gamma}|{\rm grad}_{y}
G(x,y)|)^{\frac{1}{G(x,y)+1}}\right|_{G(x,y)+1}\\
&+\left|u(|{\rm grad}_{x} G(x,y)|+|x|^{\gamma}|{\rm grad}_{y}
G(x,y)|)^{\frac{1}{G(x,y)-1}}\right|_{G(x,y)-1}.
\end{align*}

We now establish the following compactness property.
\begin{lemma}\label{comgr}
Assume that $G$ is a function of class $C^{1}$ and that $G(x,y)\in (2,N)$ for all
$(x,y)\in\Omega$. Furthermore, suppose that  $0<\gamma<\frac{N(G^{-}-s)}{s}$ and $1<s<G^{-}$. Then the function space ${\mathcal W}$ is
compactly embedded in
$L^{s}(\Omega)$.
\end{lemma}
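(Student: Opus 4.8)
The plan is to combine a variable-exponent Rellich--Kondrachov compactness away from the degeneracy set $M=\{0\}\times\RR^m$ with a uniform smallness estimate for the $L^s$-mass concentrated near $M$. Let $(u_k)\subset\mathcal W$ be bounded, say $\|u_k\|\le\Lambda$; I want to extract a subsequence that is Cauchy in $L^s(\Omega)$. The starting point is the a priori bound supplied by Theorem \ref{cafg}: the first integral on its right-hand side is controlled by the first two terms of $\|\cdot\|$ and the second integral by the last two terms (via the norm--modular relations), so there is $C=C(\Lambda)$ with
$$\int_\Omega (1+|x|^{\gamma})\,|u_k|^{G(x,y)}\,dxdy\le C\qquad\text{for every }k.$$
Since $\Omega$ is bounded, this yields in particular uniform bounds for $\int_\Omega|u_k|^{G^-}\,dxdy$ and for the weighted mass $\int_\Omega|x|^{\gamma}|u_k|^{G^-}\,dxdy$.

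Fix $\delta\in(0,1)$ and set $\Omega_\delta:=\Omega\cap\{|x|>\delta\}$. On $\Omega_\delta$ the factor $|x|^{\gamma/G(x,y)}$ is bounded below by a positive constant $c_\delta$, so the control of $|x|^{\gamma/G(x,y)}{\rm grad}_y u_k$ in $L^{G(x,y)}(\Omega)$ upgrades to a genuine bound for ${\rm grad}_y u_k$ in $L^{G(x,y)}(\Omega_\delta)$; together with the bound for $|{\rm grad}_{x} u_k|_{G(x,y)}$ this shows that $(u_k)$ is bounded in the variable-exponent Sobolev space $W^{1,G(x,y)}(\Omega_\delta)$. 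Because $G\in C^1$ is log-H\"older continuous and $s<G^-\le G(x,y)<N$ is subcritical, the variable-exponent Rellich--Kondrachov theorem (see \cite{radrep}) provides a subsequence converging strongly in $L^s(\Omega_\delta)$. Applying this with $\delta=1/j$ and extracting a diagonal subsequence, still denoted $(u_k)$, I obtain a sequence that is Cauchy in $L^s(\Omega_\delta)$ for every fixed $\delta>0$.

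The decisive step is the uniform control of the tail on the degeneracy layer $A_\delta:=\Omega\cap\{|x|<\delta\}$, and this is the only place where the numerology $1<s<G^-$ and $0<\gamma<N(G^--s)/s$ is genuinely used. I would estimate, by H\"older's inequality with exponents $G^-/s$ and $G^-/(G^--s)$ applied to $|u_k|^s=\big(|x|^{\gamma}|u_k|^{G^-}\big)^{s/G^-}|x|^{-\gamma s/G^-}$,
$$\int_{A_\delta}|u_k|^s\,dxdy\le\Big(\int_{A_\delta}|x|^{\gamma}|u_k|^{G^-}\,dxdy\Big)^{\frac{s}{G^-}}\Big(\int_{A_\delta}|x|^{-\frac{\gamma s}{G^--s}}\,dxdy\Big)^{\frac{G^--s}{G^-}}.$$
The first factor is bounded by $C^{s/G^-}$ uniformly in $k$, so everything hinges on the second factor: one must show that the negative power $|x|^{-\gamma s/(G^--s)}$ is integrable across $A_\delta$ and that its integral tends to $0$ as $\delta\to0$. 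This is precisely an integrability statement for a negative power of the distance to $M$, and the admissible range of $\gamma$ in the hypothesis is exactly what keeps the exponent $\gamma s/(G^--s)$ below the critical threshold; the constraint $0<\gamma<N(G^--s)/s$ then guarantees both finiteness and the decay $\int_{A_\delta}|u_k|^s\,dxdy\to0$ uniformly in $k$ as $\delta\to0$. I expect this weighted tail estimate---balancing the degeneracy exponent $\gamma$ against the gap $G^--s$---to be the main technical obstacle, since it is where the geometry of $M$ and the precise hypotheses interact.

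Finally I would assemble the two ingredients. Given $\eta>0$, first choose $\delta$ so small that $\sup_k\int_{A_\delta}|u_k|^s\,dxdy<\eta$, and then, for this fixed $\delta$, use that $(u_k)$ is Cauchy in $L^s(\Omega_\delta)$ to make $\int_{\Omega_\delta}|u_k-u_l|^s\,dxdy<\eta$ for all large $k,l$. Splitting $\|u_k-u_l\|_{L^s(\Omega)}^s=\int_{A_\delta}|u_k-u_l|^s\,dxdy+\int_{\Omega_\delta}|u_k-u_l|^s\,dxdy$ and using the elementary inequality $|a-b|^s\le 2^{s}(|a|^s+|b|^s)$ on the first term shows that $(u_k)$ is Cauchy in $L^s(\Omega)$. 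Its limit lies in $L^s(\Omega)$, which establishes the compact embedding $\mathcal W\hookrightarrow L^s(\Omega)$.
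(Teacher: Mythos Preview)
Your proposal is essentially the paper's proof: split $\Omega$ into a region away from the degeneracy set $\{x=0\}$ (where the weighted gradient bounds become genuine $W^{1,G(x,y)}$ bounds and Rellich--Kondrachov supplies $L^s$-compactness) and a thin layer $\{|x|<\delta\}$ (where a weighted H\"older inequality, together with the integrability of $|x|^{-\gamma s/(G^--s)}$ forced by the hypothesis on $\gamma$, gives uniformly small $L^s$-mass). The only cosmetic differences are that the paper runs the H\"older step with the variable exponent $G(x,y)$ instead of your constant $G^-$, and passes through the embedding $W^{1,G(x,y)}\hookrightarrow W^{1,G^-}$ before invoking the classical Rellich--Kondrachov theorem rather than the variable-exponent version.
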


\begin{proof}
Let $(u_{n})$ be an arbitrary bounded sequence in
${\mathcal W}$. Since the domain $\Omega$ is assumed to  intersect the
degeneracy set $[x=0]$, we deduce that there are $y_{0}\in \mathbb{R}^{m}$
and $R>0$ such that the ball of radius $R$ centered at $(0_{n},y_{0})$ is included in $\Omega.$ Thus, there
exists $0<\epsilon_{0}<\min(1,R)$ such that
$\overline{D_{\epsilon_{0}}}\subset
\overline{B}_{R}(0_{n},y_{0})\subset \overline{\Omega},$ with
$$D_{\epsilon_{0}}=\left\{(x,y)\in B_{R}(0_{n},y_{0}):\
\left|x\right|<\epsilon_{0}\right\}.$$ We fix arbitrarily $\epsilon>0$ with
$\epsilon <\epsilon_{0}$ and  set
$$D_{\epsilon}=\left\{(x,y)\in \mathbb{R}^{n+m}:\ \left|x\right|<\epsilon\ \mbox{and}\  \left|(x,y)-(0_{n},y_{0})\right|<R\right\}.$$
 By Theorem \ref{cafg}, the sequence $(u_{n})$ is bounded in $L^{G(x,y)}(\Omega
\setminus \overline{D_{\epsilon}})$. Consequently, $(u_{n})$
is  bounded in the space $W^{1,G(x,y)}(\Omega\setminus
\overline{D_{\epsilon}})$. Since $W^{1,G(x,y)}(\Omega\setminus
\overline{D_{\epsilon}} )$ is continuously embedded into $ W^{1,G^{-}}(\Omega\setminus
\overline{D}_{\epsilon})$, we  deduce that the sequence $(u_{n})$ is bounded
 in $W^{1,G^{-}}(\Omega\setminus \overline{D_{\epsilon}})$.

 By the Rellich-Kondratchov  embedding theorem, we know that there is a convergent subsequence
 of $(u_{n})$ in $L^{s}(\Omega \setminus \overline{D_{\epsilon}} )$.
  Thus, for any large enough $n$ and $m$ we have
\begin{equation}\label{31}
\displaystyle \int_{\Omega\setminus
\overline{D}_{\epsilon}}|u_{n}-u_{m}|^{s}dx <\epsilon.
\end{equation}
By the H\"older inequality for variable exponents (see \cite[p. 8]{radrep}) we have
\begin{align}
\displaystyle \int_{D_{\epsilon}}\left|u_{m}-u_{n}\right|^{s}dxdy&=
\displaystyle
\int_{D_{\epsilon}}\frac{1}{\left|x\right|^{\frac{s\gamma}
{G(x,y)}}}\left|x\right|^{\frac{s\gamma}{G(x,y)}}\left|u_{m}-u_{n}\right|^{s}dxdy\nonumber\\
&\leq
2\left|\frac{1}{\left|x\right|^{\frac{s\gamma}{G(x,y)}}}\right|_{(\frac{G(x,y)}{s})'}
\left|\left|x\right|^{\frac{s\gamma}{G(x,y)}}\left|u_{m}-u_{n}\right|^{s}\right|_{\frac{G(x,y)}{s}},\nonumber
\end{align}
where $(\frac{G(x,y)}{s})'=\frac{G(x,y)}{G(x,y)-s}.$

By Theorem \ref{cafg} and since for all $(x,y)\in D_{\epsilon}$ we have $\left|x\right|< \epsilon \leq 1$,  we obtain
\begin{align*}
\left|\left|x\right|^{\frac{s\gamma}{G(x,y)}}\left|u_{m}-u_{n}\right|^{s}\right|_{\frac{G(x,y)}{s}}&\leq
\displaystyle
\left(\int_{\Omega}\left|x\right|^{\gamma}\left|u_{m}-u_{n}\right|^{G(x,y)}dxdy\right)^{s/G^{-}}\\&
+\left(\int_{\Omega}\left|x\right|^{\gamma}\left|u_{m}-u_{n}\right|^{G(x,y)}dxdy\right)^{s/G^{+}}
<\infty.
\end{align*}

If $\rho$ denotes the modular function in the Lebesgue space with the variable exponent $G(x,y)/[G(x,y)-s]$ (see \cite[p. 9]{radrep}), we observe that
$$
\left|\frac{1}{\left|x\right|^{\frac{s\gamma}{G(x,y)}}}\right|_{(\frac{G(x,y)}{s})'}\leq
\left[\rho\left(\frac{1}{\left|x\right|^{\frac{s\gamma}{G(x,y)}}
}\right)\right]^{((\frac{G(x,y)}{s})')^{+}}+
\left[\rho\left(\frac{1}{\left|x\right|^{\frac{s\gamma}{G(x,y)}}
}\right)\right]^{((\frac{G(x,y)}{s})')^{-}}.
$$

 Define
$\Omega_{1}=\{x\in \mathbb{R}^{n}, \ \ |x|<\epsilon\}$ and
$\Omega_{2}=\{y\in \mathbb{R}^{m}, \ \ |y-y_{0}|<R\}$. It follows that
$D_{\epsilon}\subset \Omega_{1}\times \Omega_{2}=\{(x,y)\in \mathbb{R}^{n+m}:\ |x|<\epsilon, |y-y_{0}|<R \}.$
Then \begin{align*} \displaystyle
\int_{D_{\epsilon}}\frac{1}{|x|^{\frac{s\gamma}{G(x,y)-s}}}\,dxdy&\leq
\displaystyle \int_{\Omega}|x|^{\frac{-s\gamma}{G^{-}-s}}dxdy \leq
\displaystyle \int_{\Omega_{1}\times
\Omega_{2}}|x|^{\frac{-s\gamma}{G^{-}-s}}dxdy\\
&=|\Omega_{2}|\displaystyle
\int_{\Omega_{1}}|x|^{\frac{-s\gamma}{G^{-}-s}}dx= \displaystyle
\int_{0}^{\epsilon}w_{n}t^{N-1-\frac{s\gamma}{G^{-}-s}}dt=c\epsilon^{N-\frac{s\gamma}{G^{-}-s}},
\end{align*}
where $w_{N}$ is the area of the unit sphere in $\mathbb{R}^{N}.$
Invoking the above estimates, we infer that
$$\left|\frac{1}{\left|x\right|^{\frac{s\gamma}{G(x,y)}}}\right|_{(\frac{G(x,y)}{s})'}\leq
c(\epsilon^{\alpha_{1}}+\epsilon^{\alpha_{1}}),$$ where $\alpha_{1},
\alpha_{2}$ are positive constants. It follows that
$$\displaystyle
\int_{\Omega}|u_{m}-u_{n}|^{s}dxdy\leq
c(\epsilon+\epsilon^{\alpha_{1}}+\epsilon^{\alpha_{1}}).$$ This shows that
$(u_{n})$ is a Cauchy sequence in $L^{s}(\Omega)$, hence the proof is concluded.
\end{proof}

\section{A nonlinear problem driven by $\Delta_G$}

We study the following boundary value problem
\begin{equation}\label{imain}
  \left\{\begin{array}{lll} &\displaystyle -\Delta_{G(x,y)}u+A(x,y)(|u|^{G(x,y)-1}+|u|^{G(x,y)-3})u=\lambda \left|u\right|^{s-2}u
 &\quad\mbox{in}\
\partial\Omega\\
&u=0 &\quad \mbox{on}\
\partial\Omega\,,
\end{array}\right.
\end{equation}
where $\lambda>0$ and $$A(x,y)=|{\rm grad}_{x}
G(x,y)|+|x|^{\gamma}|{\rm grad}_{y}
G(x,y)|\quad \mbox{for all $(x,y)\in \Omega$}.$$

\begin{definition}\label{defi1}
We say that $u\in {\mathcal W} $ is a weak solution of problem
\eqref{imain} if for all $v\in {\mathcal W}\setminus\{0\}$
\begin{align*}
\displaystyle
&\displaystyle\int_{\Omega}\left[\left|{\rm grad}_{x}u\right|^{G(x,y)-2}{\rm grad}_{x}u{\rm grad}_{x}v+\left|x\right|^{\gamma}\left|{\rm grad}_{y}u\right|^{G(x,y)-2}
{\rm grad}_{y}u {\rm grad}_{y}v\right]dxdy+\\
&\displaystyle \int_{\Omega} A(x,y)\left|u\right|^{G(x,y)-3}(u^2+1)uv\,dxdy
=\lambda \displaystyle \int_{\Omega} \left|u\right|^{s-2}uv\,dxdy.
\end{align*}
\end{definition}

We will say that the corresponding real number $\lambda$  for which problem \eqref{imain} has a nontrivial solution is an {\it eigenvalue} and the corresponding $u\in{\mathcal W}\setminus\{0\}$ is an {\it eigenfunction} of the problem. These terms are in accordance with the related notions introduced by Fu\v{c}ik, Ne\v{c}as, Sou\v{c}ek, and Sou\v{c}ek \cite[p. 117]{fucik} in the context of {\it nonlinear} operators. Indeed, if we denote $$\begin{array}{ll}S(u)&\displaystyle :=\int_{\Omega}\frac{1}{G(x,y)}\left[\left|{\rm grad}_{x}u\right|^{G(x,y)}+
\left|x\right|^{\gamma}\left|{\rm grad}_{y}u\right|^{G(x,y)}\right]dxdy\\
&\displaystyle+\int_{\Omega}A(x,y)\left[\frac{\left|u\right|^{G(x,y)+1}}{G(x,y)+1}+
\frac{\left|u\right|^{G(x,y)-1}}{G(x,y)-1}\right]dxdy\end{array}
$$
and
$$T(u):=\int_{\Omega}\left|u\right|^{s-2}uv\,dxdy$$
then $\lambda$ is an eigenvalue for the pair $(S,T)$ of nonlinear operators (in the sense of \cite{fucik}) if and only if
there is a corresponding eigenfunction that is a solution of problem \eqref{imain} as described in Definition \ref{defi1}.

The next result establishes the existence of an infinite interval of eigenvalues.
This property corresponds to {\it arbitrary perturbations} of the reaction term, namely existence of nontrivial solutions with respect to any positive parameter $\lambda$.

\begin{thm}\label{thm}
Assume that $G$ is a function of class $C^{1}$ and that $G(x,y)\in (2,N)$ for all
$(x,y)\in\Omega$. We also suppose that $0<\gamma<\frac{N(G^{-}-s)}{s}$ and $1<s<G^{-}-1$.  Then
 any $\lambda>0$ is an eigenvalue of problem \eqref{imain}.
\end{thm}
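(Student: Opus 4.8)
The plan is to realize the eigenvalues of the pair $(S,T)$ as critical points of the energy functional
$$J_\lambda(u):=S(u)-\frac{\lambda}{s}\int_\Omega |u|^s\,dxdy,\qquad u\in{\mathcal W},$$
and, for each fixed $\lambda>0$, to produce a \emph{nontrivial} global minimizer of $J_\lambda$ by the direct method of the calculus of variations. First I would record that $J_\lambda\in C^{1}({\mathcal W},\RR)$ with
$$\langle J_\lambda'(u),v\rangle=\langle S'(u),v\rangle-\lambda\int_\Omega|u|^{s-2}uv\,dxdy,$$
where $\langle S'(u),v\rangle$ is exactly the left-hand side appearing in Definition \ref{defi1}; hence every nontrivial critical point of $J_\lambda$ is a weak solution of \eqref{imain}, equivalently a point at which $S'(u)=\lambda T'(u)$, so that $\lambda$ is an eigenvalue of $(S,T)$ in the sense of \cite{fucik}.

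Next I would establish that $J_\lambda$ is coercive and bounded below on ${\mathcal W}$. Since the modular associated with $S$ controls a power $\|u\|^{G^{-}}$ of the norm of ${\mathcal W}$ once $\|u\|\geq1$ (via the standard modular–norm inequalities for variable exponent spaces), while the \emph{continuous} embedding ${\mathcal W}\hookrightarrow L^{s}(\Omega)$ gives $\int_\Omega|u|^{s}\,dxdy\leq C\|u\|^{s}$ with $s<G^{-}$, one obtains a bound of the form $J_\lambda(u)\geq c_{1}\|u\|^{G^{-}}-c_{2}\|u\|^{s}-c_{3}$, which tends to $+\infty$ as $\|u\|\to\infty$ because $G^{-}>s$.

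For the existence of a minimizer I would invoke weak lower semicontinuity. The functional $S$ is convex and strongly continuous on ${\mathcal W}$ (the integrands $\tfrac1G|\cdot|^{G}$, $|\cdot|^{G+1}$, $|\cdot|^{G-1}$ are convex), hence weakly sequentially lower semicontinuous; and the reaction term $u\mapsto\int_\Omega|u|^{s}\,dxdy$ is weakly sequentially \emph{continuous}. This last point is precisely where Lemma \ref{comgr} enters: the compact embedding ${\mathcal W}\hookrightarrow\hookrightarrow L^{s}(\Omega)$ converts weak convergence $u_{n}\rightharpoonup u_{0}$ in ${\mathcal W}$ into strong convergence in $L^{s}(\Omega)$, so that $\int_\Omega|u_{n}|^{s}\to\int_\Omega|u_{0}|^{s}$. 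Taking a minimizing sequence, coercivity makes it bounded, reflexivity of ${\mathcal W}$ (inherited from the uniform convexity of the variable exponent Lebesgue spaces with exponents in $(1,\infty)$) yields a weakly convergent subsequence, and weak lower semicontinuity of $J_\lambda=S-\lambda T$ shows that the weak limit $u_{0}$ is a global minimizer.

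Finally, the nontriviality of $u_{0}$ follows from $\inf_{\mathcal W}J_\lambda<0=J_\lambda(0)$. Fixing any $\phi\in C^{1}_{c}(\Omega)\setminus\{0\}$ and $t\in(0,1)$, every term of $S(t\phi)$ scales with a power of $t$ at least $G^{-}-1$, so $S(t\phi)\leq Ct^{G^{-}-1}$, whereas $\frac{\lambda}{s}\int_\Omega|t\phi|^{s}=\frac{\lambda}{s}t^{s}\int_\Omega|\phi|^{s}$. The hypothesis $s<G^{-}-1$ forces $t^{s}$ to dominate $t^{G^{-}-1}$ as $t\to0^{+}$, whence $J_\lambda(t\phi)<0$ for $t$ small and therefore $u_{0}\neq0$. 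Thus $u_{0}$ is a nontrivial weak solution of \eqref{imain} and $\lambda$ is an eigenvalue; since $\lambda>0$ was arbitrary, the conclusion follows. The step I expect to be the main obstacle is the weak lower semicontinuity/weak continuity part, where the degenerate weight $|x|^{\gamma}$ and the variable exponent $G$ interact: it rests entirely on the compact embedding of Lemma \ref{comgr} and on the modular–norm estimates in ${\mathcal W}$, and one must verify reflexivity and the precise growth comparison underlying coercivity with some care.
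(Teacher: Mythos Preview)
Your argument is correct and follows a genuinely different route from the paper. The paper does \emph{not} prove global coercivity of the energy; instead it establishes a local geometry (Lemma~\ref{lema}): a negative value of $\ee$ near the origin and a positive lower bound on a large sphere $\|u\|=\rho$. It then applies Ekeland's variational principle on the closed ball $\{\|u\|\le\rho\}$ to produce a Palais--Smale sequence at the negative infimum $m$, and passes to the limit using the compact embedding of Lemma~\ref{comgr} together with the $(S_+)$ property of $\ee'$ to upgrade weak to strong convergence. Your direct-method approach sidesteps both Ekeland and the $(S_+)$ verification by exploiting the convexity of $S$ for weak lower semicontinuity; this is cleaner and more elementary, though it rests on reflexivity of ${\mathcal W}$, which the paper never states explicitly. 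One small correction: the coercivity exponent you obtain is $G^{-}-1$, not $G^{-}$, because the norm of ${\mathcal W}$ contains the component $\bigl|uA^{1/(G-1)}\bigr|_{G-1}$ whose modular only dominates the $(G^{-}-1)$-th power of that component (indeed the paper's own estimate in Lemma~\ref{lema}\,(ii) reads $\ee(u)\ge C\|u\|^{G^{-}-1}-\lambda\frac{\beta^{s}}{s}\|u\|^{s}$). This is harmless, since the hypothesis $s<G^{-}-1$ (rather than merely $s<G^{-}$) is exactly what makes $J_\lambda$ coercive and is the reason the stronger restriction on $s$ is imposed.
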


The energy functional associated to problem \eqref{imain} is
 $\ee:{\mathcal W} \rightarrow \mathbb{R}$  defined
by
\begin{align*}
\ee(u)&=\displaystyle \int_{\Omega}\frac{1}{G(x,y)}\left[\left|{\rm grad}_{x}u\right|^{G(x,y)}+
\left|x\right|^{\gamma}\left|{\rm grad}_{y}u\right|^{G(x,y)}\right]dxdy\\
&+\displaystyle \int_{\Omega}A(x,y)\left[\frac{\left|u\right|^{G(x,y)+1}}{G(x,y)+1}+\frac{\left|u\right|^{G(x,y)-1}}{G(x,y)-1}\right]dxdy
-\frac{\lambda}{s}\displaystyle \int_{\Omega} \left|u\right|^{s}dxdy.
\end{align*} Then $\ee$ is of class $C^{1}$ and for all $u,v\in {\mathcal W}$
\begin{align*}
\langle \ee'(u),v\rangle &=\displaystyle
\int_{\Omega}\left[\left|{\rm grad}_{x}u\right|^{G(x,y)-2}{\rm grad}_{x}u{\rm grad}_{x}v+\left|x\right|^{\gamma}\left|{\rm grad}_{y}u\right|^{G(x,y)-2}
{\rm grad}_{y}u {\rm grad}_{y}v\right]dxdy\\
&+\displaystyle \int_{\Omega} A(x,y)\left|u\right|^{G(x,y)-3}(u^2+1)uvdxdy
-\lambda \displaystyle \int_{\Omega}\left|u\right|^{s-2}uv\,dxdy .
\end{align*}

We recall that ${\mathcal W}$ is the closure of
$C_{c}^{1}(\Omega)$ under the norm
$$\begin{array}{ll}
\displaystyle\left\|u\right\|=\left|{\rm grad} _{x}u\right|_{G(x,y)}+\left| \left|x\right|^{\frac{\gamma}
{G(x,y)}} {\rm grad}
_{y}u\right|_{G(x,y)}&\displaystyle +\left|uA(x,y)^{1/(G(x,y)+1)}\right|_{G(x,y)+1}\\
&+\displaystyle\left|uA(x,y)^{1/(G(x,y)-1)}\right|_{G(x,y)-1}\,.\end{array}
$$

Thus, taking into account the expression of $\ee':\WW\ri\WW^*$, we can deduce that $\ee'$ is well-defined and bounded.

The proof of Theorem \ref{thm} is based on the following ideas:\\ (i) energy estimates, namely the existence of a ``valley" far from the origin and of a ``mountain" for $\ee$ near the origin;\\ (ii) existence of a negative relative minimum for $\ee$ and a sequence of ``almost critical points" for the energy, for any $\lambda>0$.

The main ingredients of the proof are the compactness property established in Lemma~\ref{comgr} and the Ekeland variational principle, which is the nonlinear version of the Bishop-Phelps theorem.

\begin{lemma}\label{lema} (i) There exists $\phi\geq 0$ in $\WW$ such that $\ee(t\phi)<0$ for all  small enough
$t>0$.

(ii) For all $\lambda>0$, there exist positive numbers $\rho$ and $\alpha$ such that $\ee(u)\geq\alpha$ for all $u\in\WW$ with $\|u\|=\rho$.
\end{lemma}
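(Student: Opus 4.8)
The plan is to exploit the fact that the reaction exponent $s$ is strictly smaller than every exponent occurring in the positive part of $\ee$: the hypothesis $1<s<G^--1$ forces $s<G(x,y)-1<G(x,y)<G(x,y)+1$ at each point, so the term $-\frac{\lambda}{s}\int_\Omega|u|^s$ is of strictly lower order than the remaining (nonnegative) integrals. For part (i) I would fix any nonnegative $\phi\in C^1_c(\Omega)$ with $\phi\not\equiv 0$, so that $\phi\in\WW$ and $\int_\Omega|\phi|^s\,dxdy>0$. Evaluating $\ee$ along the ray $t\phi$ for $0<t<1$ and using the elementary bounds $t^{G(x,y)}\le t^{G^-}$ and $t^{G(x,y)\pm 1}\le t^{G^-\pm 1}$, one obtains
$$\ee(t\phi)\le C_1 t^{G^-}+C_2 t^{G^-+1}+C_3 t^{G^--1}-\frac{\lambda}{s}\,t^s\int_\Omega|\phi|^s\,dxdy,$$
with $C_1,C_2,C_3$ depending only on $\phi$ and $G$. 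Factoring out $t^s$ (the smallest exponent) and letting $t\to 0^+$, the bracketed factor tends to $-\frac{\lambda}{s}\int_\Omega|\phi|^s\,dxdy<0$, because the surviving powers of $t$ carry the strictly positive exponents $G^--s$, $G^-+1-s$ and $G^--1-s$. Hence $\ee(t\phi)<0$ for all sufficiently small $t>0$.

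For part (ii) the natural radius $\rho$ will have to be taken large rather than small, precisely because part (i) already shows that $\ee$ dips below zero along $\phi$ near the origin. I would keep the full nonnegative part $P(u)$ of the energy and bound it below through the modular--norm inequalities in variable exponent Lebesgue spaces, combined with the continuous embedding $\WW\hookrightarrow L^s(\Omega)$ provided by Lemma~\ref{comgr}, which gives $\int_\Omega|u|^s\,dxdy\le C\|u\|^s$. The key observation is that when $\|u\|$ is large, the norm component attaining the maximum among the four summands defining $\|\cdot\|$ is at least $\|u\|/4$ and exceeds $1$, so the matching modular dominates a power of that component. Taking the least favourable exponent among $G$, $G$, $G+1$, $G-1$, namely $(G-1)^-=G^--1$, yields an estimate of the form
$$\ee(u)\ge c_1\|u\|^{G^--1}-c_2\|u\|^s.$$
Since $G^--1>s$, the map $\rho\mapsto c_1\rho^{G^--1}-c_2\rho^s$ is strictly positive and tends to $+\infty$ for $\rho$ large; choosing such a $\rho$ and setting $\alpha:=c_1\rho^{G^--1}-c_2\rho^s>0$ gives $\ee(u)\ge\alpha$ for every $u$ with $\|u\|=\rho$.

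The evaluation in (i) and the comparison of powers are routine. The main obstacle is the lower bound on $P(u)$ in (ii): because $\ee$ mixes four distinct variable exponents, one must split according to which norm component is dominant and apply the correct branch of the modular inequality (the branch for norm $\ge 1$ on the large component, and nonnegativity of the rest) to each piece, and then verify that even the weakest resulting exponent $G^--1$ still strictly exceeds $s$ — which is exactly the role of the sharpened hypothesis $s<G^--1$ of Theorem~\ref{thm} compared with the milder $s<G^-$ of Lemma~\ref{comgr}. Once this coercivity-type estimate is secured, the geometry needed for the subsequent Ekeland argument is immediate: part (i) supplies a point of negative energy inside the ball of radius $\rho$, while part (ii) supplies a strictly positive floor $\alpha$ on its boundary, so that the infimum of $\ee$ over the closed ball is negative and cannot be attained on the sphere.
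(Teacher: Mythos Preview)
Your proposal is correct and follows essentially the same route as the paper: for (i) you evaluate $\ee$ along $t\phi$ with $0<t<1$, bound each positive integral by $t^{G^--1}$, $t^{G^-}$, $t^{G^-+1}$ times a fixed constant, and conclude from $s<G^--1$; for (ii) you use the embedding of Lemma~\ref{comgr} to control $\int_\Omega|u|^s$ by $\|u\|^s$ and obtain $\ee(u)\ge c_1\|u\|^{G^--1}-c_2\|u\|^s$, then take $\rho$ large. The only difference is that you spell out the modular--norm comparison (via the dominant component of the four-term norm) more explicitly than the paper, which simply asserts the lower bound $\ee(u)\ge C\rho^{G^--1}-\lambda\beta^s s^{-1}\rho^s$ for $\rho>1$.
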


\begin{proof}
(i) Fix $\phi\in\WW\setminus\{0\}$ with $\phi\geq 0$ and $\|\phi\|<1$. For all $t\in (0,1)$ we have
$$\begin{array}{ll}
\ee(t\phi)&\di\leq\frac{t^{G^-}}{G^-}\int_\Omega |{\rm grad}_x\phi|^{G(x,y)}dxdy+
\frac{t^{G^-}}{G^-}\int_\Omega |x|^\gamma|{\rm grad}_y\phi|^{G(x,y)}dxdy\\
&\di +\frac{t^{G^-+1}}{G^-+1}\int_\Omega A(x,y)\phi^{G(x,y)+1}dxdy+
\frac{t^{G^--1}}{G^--1}\int_\Omega A(x,y)\phi^{G(x,y)-1}dxdy-\lambda\,\frac{t^s}{s}\int_\Omega\phi^sds\\
&\di=C_1t^{G^-}+C_2t^{G^-+1}+C_3t^{G^--1}-\lambda C_4t^s,\end{array}$$
where $C_1$, $C_2$, $C_3$ and $C_4$ are positive numbers.

Since $s<G^--1$, our assertion follows for small enough $t>0$.

(ii) For all $u\in\WW$ we have
\bb\label{munte}\begin{array}{ll}
\ee(u)&\di\geq\frac{1}{G^+}\int_\Omega\left[|{\rm grad}_xu|^{G(x,y)}+|x|^\gamma |{\rm grad}_yu|^{G(x,y)} \right]dxdy\\
&\di+\frac{1}{G^++1}\int_\Omega A(x,y)|u|^{G(x,y)+1}dxdy+\frac{1}{G^+-1}\int_\Omega A(x,y)|u|^{G(x,y)-1}dxdy\\ &\di-\frac{\lambda}{s}\int_\Omega |u|^sds.\end{array}\bbb

By Lemma \ref{comgr}, there exists $\beta >0$ such that
$$\left|u\right|_{s}\leq \beta\left\|u\right\|, \ \ \mbox{for all}\ u\in \WW.$$

We fix $\rho>1$ and  assume that $\|u\|=\rho$. By relation \eqref{munte} we obtain, for a suitable positive constant $C$ depending only on $G^+$ and $G^-$,
$$\ee(u)\geq C\,\|u\|^{G^--1}-\lambda\,\frac{\beta^s}{s}\,\|u\|^s=C\rho^{G^--1}-\lambda\,\frac{\beta^s}{s}\,\rho^s.$$
Taking higher and higher values of $\rho$, we deduce our statement for all $\lambda>0$.
\end{proof}

By Lemma \ref{lema} we can deduce that there exists  big enough $\rho>0$ such that $$\inf\{\ee(u):\ u\in\WW,\, \|u\|\leq\rho\}=:m<0$$and $$\sup\{\ee(u):\ u\in\WW,\,\|u\|=\rho\}>0.$$

Let $M$ be the complete metric space defined by $M:=\{ u\in\WW:\ \|u\|\leq\rho\}$. Applying Ekeland's variational principle to $\ee$ restricted to $M$ we find a sequence $(u_n)\subset\WW$ of ``almost critical points" of $\ee$, that is,
\bb\label{munte1}\ee(u_n)\ri m<0\quad\mbox{and}\quad\ee'(u_n)\ri 0\quad\mbox{as}\ n\ri\infty.\bbb

Since $(u_n)$ is bounded it follows that, up to a subsequence, $u_n\rightharpoonup u$ in $\WW$. Next, by Lemma~\ref{comgr}, we can assume that $u_n\ri u$ in $L^s(\Omega)$. Combining this information with \eqref{munte1} and the fact that $\ee'$ is a mapping of type $(S_+)$, we deduce that $u_n\ri u$ in $\WW$, hence $u$ is a nontrivial solution of problem~\eqref{imain}.
This concludes the proof of Theorem~\ref{thm}.

\section*{Concluding remarks, perspectives, and open problems} (i) Patterns  and  waves  are  all  around  us.  They occur in   many   different   systems  and  on  vastly  different    scales    in    both time  and  space,  and  their
dynamic behavior is similar  across  these  systems. Mathematical   techniques can  help  identify  the  origins  and  common  properties  of  patterns and  waves  across  various  applications.  Understanding  the  ways  in  which  such structures  are  created  can  help  experimentalists  identify  the  mechanisms  that
generate  them  in  specific  systems.
Despite  many  advances,  understanding patterns  and  waves associated to transonic flows  still  poses  significant
mathematical challenges. To illustrate the difficulties, we have considered in this paper  a  case  in  which  one
has  developed  a  partial  differential  equation that possesses both elliptic and hyperbolic regions.
The flow is supersonic in the elliptic region, while a shock wave is created at the boundary between the elliptic and hyperbolic regions.
The model studied in this paper is described by a Baouendi--Grushin operator, which
can be seen as the Tricomi operator in the framework of the transonic flow restricted to subsonic regions.
  We were interested  in
assessing  the existence of stationary waves under arbitrary  perturbations  of  the reaction, which corresponds
to the study of a suitable nonlinear eigenvalue problem.

\smallskip\noindent
(ii) The mathematical analysis carried out in this paper considers the unbalanced energy defined in \eqref{uu1} with the associated differential operator defined in \eqref{uu2}. It appears to be worth to further investigate patterns described by the variational integral
\bb\label{uu3}\int_{\Omega}\left(\left|{\rm grad}_{x}u\right|^{G(x,y)}+
\left|x\right|^{\gamma}\left|{\rm grad}_{y}u\right|^{G(x,y)}\right)dz\bbb
with corresponding anisotropic Baouendi-Grushin operator
$${\rm div}_{x}\left(G(x,y)\,|{\rm grad}_{x} |^{G(x,y)-2}{\rm grad}_{x} \right)+{\rm div}_{y}\left(G(x,y)\,|x|^{\gamma}\left|{\rm grad}_{y} \right|^{G(x,y)-2}{\rm grad}_{y}\right).$$

\smallskip\noindent
(iii) We remark that since the  energy functionals introduced in relations \eqref{uu1} and \eqref{uu3}
have a degenerate action on the set where the gradient vanishes, it is a natural question to study what
happens if the integrand is modified in such a way that, if $|{\rm grad} u|$ is also small, there exists
an imbalance between the two terms of every integrand.

\smallskip\noindent
(iv) Lemma \ref{comgr} played a key role in the proof of the existence of an interval of eigenvalues for problem \eqref{imain}. This compactness property is established in a {\it subcritical} setting, which corresponds to
the hypothesis $s<G^-$, where $s$ describes the growth of the right-hand side of problem \eqref{imain}. In fact, Theorem \ref{thm} remains true if $s$ is replaced with a variable coefficient $s(x)$, provided that $s^+<G^-$.
We do not have any knowledge about the behaviour in the {\it almost critical} case that arises in the following situation: there exists $x_0\in\Omega$ such that $s(x_0)=G^-$ and $s(x)<G^-$ for all $x\in\Omega\setminus\{x_0\}$.

\smallskip\noindent
(v) The weighted inequality established in Theorem \ref{cafg} is stated under the hypothesis that the variable coefficient is of class $C^1$. We consider that a valuable research direction (with relevant applications to nonsmooth mechanics) corresponds to potentials leading to a lack of regularity. For instance, if $G$ is locally Lipschitz, one can use the notion of {\it Clarke generalized gradient}. We do not know of any version of Theorem~\ref{cafg}
for potentials $G$ having loss of regularity.

\smallskip\noindent
(vi) In a forthcoming paper, we will study new classes of nonlinear boundary value problems involving the magnetic Baouendi-Grushin operator, see \cite{laptev, laptev1}. This operator is
$$G_{\mathcal A}:=-({\rm grad}_G+i\beta{\mathcal A}_0)^2\quad\mbox{for}\ -\half\leq\beta\leq\half,$$
where
$${\mathcal A}_0=({\mathcal A}_1,{\mathcal A}_2,{\mathcal A}_3,{\mathcal A}_4)=\left(-\frac{\partial_yd}{d}, \frac{\partial_xd}{d},-2y\frac{\partial_td}{d},2x\frac{\partial_td}{d} \right),$$
$${\rm grad}_G=(\partial_x,\partial_y,2x\partial_t,2y\partial_t),$$ with $z=(x,y)$, $|z|=\sqrt{x^2+y^2}$, and $d(z,t)=(|z|^4+t^2)^{1/4}$ is the Kaplan distance.

\smallskip\noindent
(vii) We believe that the approaches and techniques developed for studying problem \eqref{imain} can be useful for the qualitative analysis of further classes of nonlinear problems described by mixed type operators, either stationary or evolutionary. These degenerate or singular problems include the von
Neumann problem (which describes the shock reflection-diffraction by two-dimensional wedges with concave
corner), the Lighthill problem (which describes the shock diffraction by two-dimensional wedges with convex corner),
and the Prandtl-Meyer problem (in the framework of supersonic flows impinging onto solid wedges). These models
 describe very relevant phenomena that arise in fluid mechanics. At the same time, they are also fundamental mathematical models in the
 theory of multidimensional conservation laws. These reflection/diffraction
configurations are the core configurations in the structure of global entropy solutions of the two-dimensional
Riemann problem for hyperbolic conservation laws, whereas the Riemann solutions
are the building blocks and local structure of general solutions, and determine global attractors
and asymptotic states of entropy solutions, as time tends to infinity, for multidimensional
hyperbolic systems of conservation laws. In this sense, we have
to understand the reflection/diffraction phenomena in order to fully comprehend global
entropy solutions to multidimensional hyperbolic systems of conservation laws.

\medskip
{\bf Acknowledgments.}
The research of V.D.~R\u adulescu and D.D.~Repov\v{s} was supported by the Slovenian Research Agency grants
P1-0292, J1-8131, J1-7025, N1-0064, and N1-0083. V.D.~R\u adulescu also acknowledges the support through the Project MTM2017-85449-P of the DGISPI (Spain).

\end{document}